\newcommand{\gab}{\mathcal{G}(g,\alpha , \beta )}
\newcommand{\ltwo}{L^2(\bR )}
\newtheorem{tm}{Theorem}
\newtheorem{lemma}[tm]{Lemma}
\newtheorem{prop}[tm]{Proposition}
\newcommand{\rems}{\noindent\textsl{REMARKS:}}
\newcommand{\fif}{if and only if}
 \theoremstyle{definition}
\newcommand{\beqa}{\begin{eqnarray*}}
\newcommand{\eeqa}{\end{eqnarray*}}
\newcommand{\field}[1]{\mathbb{#1}}
\newcommand{\bR}{\field{R}}        
\newcommand{\bN}{\field{N}}        
\newcommand{\bZ}{\field{Z}}        
\newcommand{\bC}{\field{C}}        
\newcommand{\bQ}{\field{Q}}        
 \def\cF{\mathcal{F}}              
\def\<{\left<}
\def\>{\right>}
\def\mv1{M_v^1}
\begin{document}
\begin{abstract}
We derive new obstructions for Gabor frames. This note explains
and proves the computer generated  observations of 
Lemvig and Nielsen~\cite{LN15}.
\end{abstract}

\title{Partitions of Unity and New Obstructions for Gabor Frames}
\author{Karlheinz Gr\"ochenig}
\address{Faculty of Mathematics \\
University of Vienna \\
Nordbergstrasse 15 \\
A-1090 Vienna, Austria}
\email{karlheinz.groechenig@univie.ac.at}
\subjclass[2000]{}
\date{}
\keywords{}
\thanks{K.\ G.\ was
  supported in part by the  project P26273 - N25  of the
Austrian Science Fund (FWF)}
\maketitle

\section{Introduction}

Given $\alpha , \beta  >0$ and $g\in \ltwo$, let $\gab = \{ e^{2\pi i
  \beta l \, \cdot } g(\cdot - \alpha k) : k,l \in \bZ \} $ be the Gabor system with
window $g$ and lattice parameters $\alpha $ and $\beta $. The basic
question is when $\gab $ generates a frame (called a \emph{Gabor frame}), i.e., when there exist
 $A,B>0$ such that 
$$
A\|f\|_2^2 \leq \sum _{k,l \in \bZ } |\langle f, e^{2\pi i
  \beta l \, \cdot } g(\cdot - \alpha k) \rangle |^2  \leq B\|f\|_2^2
\qquad \text {for all } f\in \ltwo \, .
$$
See~\cite{Gr14} for a recent survey  of Gabor frames with background
and a collection of references. 

For the characterization of Gabor frames  the Zak transform is a
standard tool. It  is defined as 
$$ Z_\alpha f (x,\xi  ) = \sum _{r\in \bZ } g(x-\alpha r) e^{2\pi i
  r\xi } \, .$$

Given $p,q \in \bN $ with $p\leq q$ and $\alpha \beta = \tfrac{p}{q}$,
we define the matrix $P(x,\xi )$ with entries 
\begin{equation}
  \label{eq:1}
P(x,\xi ) _{kl} = Z_{\alpha q} g(x+ \alpha l + \tfrac{k}{\beta} , \xi
) = Z_{\alpha q} (x+\alpha (l + \tfrac{qk}{p}),\xi ) \, .  
\end{equation}
By (quasi-)periodicity of $Z$ we may restrict the index set to  $k = 0
, \dots , p-1$, and 
$l= 0, \dots , q-1$. Thus $P(x,\xi ) $ is a $p\times q$-matrix.

Lyubarski and Nes~\cite{LN13} gave the following characterization of Gabor frames
over rational lattices.  We assume that the window is in the
Feichtinger algebra $g\in M^1(\bR )$, then the Zak transform  and the
matrix-valued function $P$ are continuous. 

\begin{lemma} \label{l1}
  Assume that $\alpha \beta = \tfrac{p}{q}\in \bQ $ with relatively
  prime $p,q $, and $g\in M^1(\bR )$, and let $P$ be the corresponding family of $p\times
  q$-matrices.
Then $\gab $ is a frame, \fif\ $P(x,\xi ) $ has   rank $p$ for all
$x,\xi \in \bR ^2$. 
\end{lemma}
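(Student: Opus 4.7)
The plan is to use the Zibulski--Zeevi matrix representation: via the Zak transform $Z_{\alpha q}$, the frame condition translates into a pointwise rank condition on $P(x,\xi )$. Since $g\in M^1(\bR )$, $Z_{\alpha q}g$ is continuous, so $P$ is continuous on $\bR ^2$, and $Z_{\alpha q}$ is a unitary from $\ltwo $ onto $L^2(Q)$ with $Q=[0,\alpha q)\times [0,1)$.

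The key algebraic identity is: writing $k=qm+s$ with $0\leq s<q$ and using $\alpha q\beta =p\in \bZ$ (so that $e^{-2\pi i\beta l\alpha qr}=1$ for all $l,r\in \bZ$),
$$Z_{\alpha q}\bigl( M_{\beta l}T_{\alpha k}g\bigr)(x,\xi )=e^{-2\pi im\xi }\,e^{2\pi i\beta lx}\,Z_{\alpha q}g(x-\alpha s,\xi ).$$
With $F:=Z_{\alpha q}f$, one expands the frame sum $\sum _{k,l}|\langle f,M_{\beta l}T_{\alpha k}g\rangle |^2$ and collapses it in two stages: Parseval in $\xi \in [0,1)$ handles the sum over $m\in \bZ $, while a DFT-style Parseval argument on $L^2([0,1/\beta ))$ handles the sum over $l\in \bZ $ after partitioning $[0,\alpha q)=\bigsqcup _{k=0}^{p-1}[k/\beta ,(k+1)/\beta )$ (this partition is necessary because $e^{-2\pi i\beta lx}=e^{-2\pi iplx/(\alpha q)}$ is an orthonormal system only on the $p$-fold quotient of $[0,\alpha q)$). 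The quasi-periodicity of $Z_{\alpha q}g$ then identifies the shifted samples with the entries $P(x,\xi )_{kl}=Z_{\alpha q}g(x+\alpha l+k/\beta ,\xi )$, and one arrives at an identity
$$\sum _{k,l\in \bZ }|\langle f,M_{\beta l}T_{\alpha k}g\rangle |^2=c\int _{Q'}\|P(x,\xi )^{*}\mathbf{v}(x,\xi )\|_{\bC ^q}^{2}\,dx\,d\xi ,$$
where $Q'=[0,1/\beta )\times [0,1)$, $\mathbf{v}(x,\xi )\in \bC^p$ is the vector $(F(x+k/\beta ,\xi ))_{k=0}^{p-1}$, and $c>0$.

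Plancherel applied to the same vectorization yields $\|f\|_{2}^{2}=c'\int _{Q'}\|\mathbf{v}(x,\xi )\|_{\bC ^p}^{2}\,dx\,d\xi $, so the frame inequality is equivalent to
$$A\|\mathbf{v}\|_{\bC ^p}^{2}\leq \|P(x,\xi )^{*}\mathbf{v}\|_{\bC ^q}^{2}\leq B\|\mathbf{v}\|_{\bC ^p}^{2}$$
for a.e.\ $(x,\xi )\in Q'$ and every $\mathbf{v}\in \bC^p$. The upper bound is automatic from continuity of $P$ on the compact fundamental domain. The lower bound says that the linear map $P(x,\xi )^{*}\colon \bC^p\to \bC^q$ is bounded below, equivalently injective, equivalently $P(x,\xi )$ has full row rank $p$. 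Continuity of $P$, which is precisely where the hypothesis $g\in M^1(\bR )$ enters, upgrades ``a.e.''\ to every $(x,\xi )$.

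The main obstacle is the computation producing the matrix identity: matching the quadratic form produced by the Parseval/DFT manipulations to the explicit matrix $P$ of the statement requires careful bookkeeping of phase factors and index substitutions, and uses the coprimality of $p$ and $q$ to identify a complete set of residues $\bZ /p\bZ$ in the exponents $e^{2\pi i\beta l\alpha j}=e^{2\pi iplj/q}$ that arise. Once this matrix identity is in hand, the rank characterization is a short linear-algebra observation and the passage from a.e.\ to everywhere $(x,\xi )$ is pure continuity.
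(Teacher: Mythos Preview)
The paper does not prove Lemma~\ref{l1}; it is quoted from Lyubarskii and Nes~\cite{LN13} as a known characterization, so there is no in-paper proof to compare against. Your proposal is precisely the standard Zibulski--Zeevi derivation that underlies~\cite{LN13}: conjugate the frame operator by the unitary Zak transform $Z_{\alpha q}$, use the commutation relations of $Z_{\alpha q}$ with the time-frequency shifts on $\alpha\bZ\times\beta\bZ$ (which rest on $\alpha q\beta=p\in\bZ$), apply Parseval in $\xi$ and a discrete Fourier expansion in $x$ on the $p$-fold partition of $[0,\alpha q)$, and reduce the frame inequality to a fibrewise lower bound for $P(x,\xi)^{*}$. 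The continuity of $Z_{\alpha q}g$ for $g\in M^1(\bR)$ then turns the a.e.\ statement into a pointwise one, exactly as you say.

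One small comment on your last paragraph: the coprimality of $p$ and $q$ is not what makes the Parseval identities go through (those need only $\alpha q\beta\in\bZ$); rather, it guarantees that the sampling grid $\{\alpha l+k/\beta:0\le k<p,\ 0\le l<q\}$ modulo $\alpha q$ consists of exactly $q$ distinct points (equivalently, that the $p\times q$ matrix $P(x,\xi)$ is the minimal, non-redundant Zibulski--Zeevi matrix). Your phrase about ``a complete set of residues $\bZ/p\bZ$'' is pointing at the right phenomenon but slightly misplaces where it enters. Apart from this bookkeeping remark, the sketch is sound and reconstructs the cited result correctly.
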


 The following proposition  is a new obstruction  for Gabor frames. It
 explains rigorously  some of  the observations obtained  in
~\cite{LN15} with the help of computer algebra.

\begin{prop} \label{p2}
  Assume that $g\in M^1(\bR ) $ generates a partition of unity
  \begin{equation}
    \label{eq:2}
    \sum _{s\in \bZ } g(x-s) = 1 \qquad \text{ for all } x \in \bR  \, .
  \end{equation}
Let $m,n,r \in \bN $, $j= 1, \dots , r-1$, such that 
$
(r-1)m+1 < rn+j < rm$ and $rn+j$ and $rm$ are relatively prime. 

If $\alpha = \tfrac{1}{m}$ and $\beta = n + \tfrac{j}{r}$, then  $\gab $ is not a frame. 
\end{prop}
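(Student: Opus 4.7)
The plan is to apply Lemma~\ref{l1}: it suffices to find a single point $(x,\xi)$ at which the $p\times q$ matrix $P(x,\xi)$ has rank strictly less than $p=rn+j$. I will work at $\xi=0$ and any $x\in\bR$, since there the Zak transform $Z_{\alpha q}g$ reduces to a pure sum of translates of $g$, which is exactly the kind of object the partition of unity \eqref{eq:2} collapses.

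First I would unpack the parameters. With $p=rn+j$ and $q=rm$ (coprime by hypothesis), one has $\alpha q=r\in\bN$, $\alpha=1/m$ and $1/\beta=r/p$, so
\[
P(x,0)_{kl}=\sum_{s\in\bZ}g\!\left(x+\tfrac{l}{m}+\tfrac{kr}{p}-rs\right),\qquad 0\le k\le p-1,\ 0\le l\le q-1.
\]

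The key step is to produce $m-1$ vectors in the kernel of $P(x,0)$ using the partition of unity. For each residue $b\in\{0,\dots,m-1\}$ let $v_b\in\bC^q$ be the indicator of $\{l:l\equiv b\pmod m\}$. I expect that $P(x,0)v_b$ is the constant all-ones column, independent of $b$ and of $x$: writing $l=am+b$ with $a=0,\dots,r-1$, the $k$-th component becomes
\[
\sum_{a=0}^{r-1}\sum_{s\in\bZ}g\!\left(x+\tfrac{b}{m}+\tfrac{kr}{p}+a-rs\right),
\]
and reindexing by $t=rs-a$ (which bijects $\{0,\dots,r-1\}\times\bZ$ with $\bZ$) collapses this via \eqref{eq:2} to $\sum_{t\in\bZ}g(x+b/m+kr/p-t)=1$. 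The differences $v_1-v_0,\dots,v_{m-1}-v_0$ are then $m-1$ linearly independent kernel vectors (their supports single out each residue class), so $\mathrm{rank}\,P(x,0)\le q-(m-1)=rm-m+1$.

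To conclude, the hypothesis $(r-1)m+1<rn+j$ rewrites as $rm-m+1<p$, forcing $\mathrm{rank}\,P(x,0)\le p-1<p$, and Lemma~\ref{l1} delivers that $\gab$ is not a frame. The only obstacle I anticipate is bookkeeping: the parameter reductions $\alpha q=r$ and $k/\beta=kr/p$, and the bijection $(a,s)\leftrightarrow t$, all have to line up precisely for the partition of unity to telescope the double sum. The coprimality assumption on $rn+j$ and $rm$ plays no role here beyond making Lemma~\ref{l1} directly applicable.
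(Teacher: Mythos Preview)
Your proposal is correct and follows essentially the same route as the paper: the vectors $v_b$ (indicators of residue classes modulo $m$ in $\{0,\dots,q-1\}$) are exactly the paper's $v_l=\sum_{a=0}^{r-1}\delta_{l+am}$, and your collapse of the double sum via the bijection $(a,s)\mapsto rs-a$ matches the paper's reindexing step. The rank estimate and the invocation of Lemma~\ref{l1} then proceed identically.
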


\begin{proof}
In this case $\alpha \beta = \tfrac{rn+j}{rm}$, $p=rn+j$,
$q=rm$, and $\alpha q = r$
  We will show  that the matrices $P(x,0)$ have rank smaller than $ p=rn+j$ for
  all $x$ and thus violate the condition of Lyubarski and Nes.
Note that $$
P(x,0)_{k,l} = Z_r g(x+\alpha l +\tfrac{k}{\beta},0)= \sum _{s\in \bZ
} g(x+\tfrac{l}{m}  +\tfrac{rk}{rn+j}  - rs)$$
 is a periodization of $g$ with period $r$. 

Now set $v_l = \sum _{j=0}^{r-1} \delta _{l +jm} \in \bC ^{q}$
for  $l = 0, \dots , m-1$, where
$\delta _k (k) = 1 $ and $\delta _k(s) = 0$ for $s\neq k$. Clearly
these vectors are linearly independent. 
  Then
  \begin{align*}
    \big(P(x,0)v_l \big)_k &=  \sum _{j=0}^{r-1}    P(x,0)_{k,l
      + jm} \\
&= \sum _{j=0}^{r-1} \sum _{s\in \bZ
} g(x+\tfrac{l + jm }{m }   +\tfrac{k}{\beta} - rs) \\
&= \sum _{s\in \bZ
} g(x+\tfrac{l }{m}  +\tfrac{k}{\beta} - s) = 1 \, 
  \end{align*}
for $k=0,\dots , p-1$ and $l = 0, \dots , m-1$. For the last equality
we have used hypothesis~\eqref{eq:2}. 

Setting $e= (1, \dots , 1)^T \in \bC ^p$, we have found $m$
linearly independent vectors $v_l $ such that $    P(x,0)v_l =
e$. Consequently the vectors $v_0 - v_l, l= 1, \dots , m-1$ are in the
kernel of $P(x,0)$. Since they  are also 
linearly independent, we know that 
$\mathrm{dim}\, (\mathrm{ker}\, P(x,0)) \geq m-1$. 
We obtain that 
\begin{align*}
\mathrm{rank}\, (P(x,0))& = rm  - \mathrm{dim}\, (\mathrm{ker}\,
P(x,0) \\
&\leq rm - (m-1) = (r-1)m +1 < rn+j = p  \, ,
 \end{align*}
by assumption on $m,n,r$. Thus the condition of Lemma~\ref{l1} is
violated and $\gab $ cannot be a frame. 
\end{proof}

\rems\ 1. Lemvig and Nielsen~\cite{LN15} observed that for the linear spline
$B_2 = \chi _{[-1/2,1/2]} \ast \chi _{[-1/2,1/2]}$ the lattices
$\tfrac{1}{2m+1} \bZ \times  \tfrac{n+1}{2} \bZ $ do not generate a
Gabor frame. With Proposition~\ref{p2} this is now a rigorous result. 

To put the observations of~\cite{LN15} into context, let
$$
\cF (g) = \{ (\alpha,\beta )\in \bR _+^2: \gab \,\, \text{ is a frame}
\}
$$
be the \emph{frame set} of $g$. This is  the set of all rectangular
lattices $\alpha \bZ \times \beta \bZ $ that generate a Gabor frame
with window $g$. Proposition~\ref{p2} says that the points
$(\tfrac{1}{m}, n+ \tfrac{j}{r})$ near the hyperbola $\alpha \beta =1$
do not belong to the frame set $\cF (g)$. 
 In addition, Lemvig and Nielsen observed that many of these points
 are not isolated points in the complement of  $\cF (g)$.  Their
 observations destroy the initial hope that the frame 
set   with respect to
$B$-spline windows  possesses a
simple structure. In fact, at this time it seems that the complexity
of the frame set of $B$-spline windows  resembles more that  of the characteristic
function $\chi _{[0,1]}$ determined in the stunning work of Dai and
Sun~\cite{sun13}. 

2. The partition-of-unity  condition~\eqref{eq:2} is a well known obstruction in Gabor analysis. It
was already observed by Del Prete~\cite{delprete} that $\gab $ fails
to be a frame when  $\beta = 2, 3, \dots
$ and $\alpha >0$ is arbitrary.  It comes as a surprise that
this condition excludes so many more lattices. In fact, for fixed
$n\in \bN , n\geq 2$ and $m=n+1$,  Proposition~\ref{p2} excludes  a countable
set of points of the
form $(\tfrac{1}{n+1}, n+\tfrac{j}{r}), r\in \bN$ with accumulation
point $(\tfrac{1}{n+1}, n+1)$ from $\cF (g)$. 


\def\cprime{$'$} \def\cprime{$'$} \def\cprime{$'$} \def\cprime{$'$}
  \def\cprime{$'$} \def\cprime{$'$}

 \bibliographystyle{abbrv}
 \bibliography{general,new}

\end{document}